\newtheorem{thm}{Theorem}[section]
\newtheorem{lem}[thm]{Lemma}
\newtheorem{prop}[thm]{Proposition}
\theoremstyle{definition}
\theoremstyle{remark}
\theoremstyle{example}
\numberwithin{equation}{section}
\begin{document}

\title{The homotopy types of $Sp(n)$-gauge groups over $S^{4m}$ }
\author{Sajjad Mohammadi}

\date{}
\maketitle
\begin{abstract}
Let $m$ and $n$ be two positive integers such that $m < n$. Denote by $P_{n,k}$ the principal $Sp(n)$-bundle over $S^{4m}$ and $\mathcal{G}_{k,m}(Sp(n))$ be the gauge group of $P_{n,k}$ classified by $k\varepsilon'$, where $\varepsilon'$ is a generator of $\pi_{4m}(B(Sp(n)))\cong\mathbb{Z}$. In this article, we will partially classify the homotopy types of $\mathcal{G}_{k,m}(Sp(n))$ by giving a lower bound for the number of homotopy types of $\mathcal{G}_{k,m}(Sp(n))$. Also, in special cases $Sp(3)$-gauge groups over $S^8$ and $Sp(4)$-gauge groups over $S^{12}$ we give an upper bound for the number of homotopy types of these gauge groups.
\end{abstract}
\textbf{Keywords:} gauge group; homotopy type;  $Sp(n)$.\\\\
$2010$ Mathematics Subject Classification: Primary $54C35$; Secondary $55P15$.\\\\
\section{Introduction }\label{a0}
Let $G$ be a topological group and let $P\rightarrow B$ be a principal $G$-bundle over connected finite complex $B$. The gauge group $\mathcal{G}(P)$ of $P$ is the group of $G$-equivariant automorphisms of $P$ which fix $B$. The topology of gauge groups plays an important role in the physics and identification of four-dimensional manifolds. Crabb and Sutherland \cite{[CS]} have shown that if $B$ and $G$ are as above, then the number of homotopy types amongst all the gauge groups of principal $G$-bundles over $B$ is finite. This is in spite of the fact that the number of isomorphism classes of principal $G$-bundles over $B$ is often infinite.\par
Let $G$ be a simply-connected, simple compact Lie group. For integers $a$ and $b$ let $(a, b)$ be the greatest common divisor of $|a|$ and $|b|$. There is a one-to-one correspondence between the number of equivalence classes of principal $G$-bundles over $B=S^4$ and homotopy classes of maps $[S^4, BG]\cong  \mathbb{Z}$.
The first classification was done by Kono \cite{[K_1]} for $G = SU(2)$. Kono showed that there is a homotopy equivalence $\mathcal{G}_k \simeq \mathcal{G}_{k'}$ if and only if $(12,k)=(12,k')$. It thus follows that there are precisely six homotopy types of $SU(2)$-gauge groups over $S^4$. In recent years, the determination of the precise number of homotopy types of gauge groups for some Lie groups on manifolds with different dimensions interested topologists. For principal bundles over $S^4$ with different structure groups, they obtained some useful results that we in here mention some of their results. $U(n)$-gauge groups (see \cite{[C]}); $PU(3)$-and $PSp(2)$-gauge groups (see \cite{[HKKT]});  The gauge groups of exceptional Lie groups (see \cite{[HKST]}); $SO(4)$-gauge groups (see \cite{[KMT]}); $SU(n)$-gauge groups (see \cite{[T_3]}). Also, there are also several classification results for gauge groups of principal bundles with
base spaces other than $S^4$, such as the gauge groups of quaternionic line bundles over spheres (see \cite{[CS1]}); The gauge groups over high-dimensional manifolds (see \cite{[H]}); $SU(n)$-gauge groups over $S^{2m}$ (see  \cite{[M]}); $PU(5)$-and $PU(3)$-gauge groups over $S^4$ and $S^6$, respectively (see \cite{[R]}).\par
In \cite{[KK]} Kishimoto and Kono classify the homotopy types of $Sp(n)$ gauge groups over $S^4$, by showing that if there is a homotopy equivalence $\mathcal{G}_{k,1}(Sp(n))\simeq \mathcal{G}_{k',1}(Sp(n))$ then $(4n(2n+1),k)=(4n(2n+1),k')$. In this paper, in general case, we study the classification of the homotopy types of the gauge groups of principal $Sp(n)$-bundles over $S^{4m}$ and we give only lower bounds for the number of homotopy types and does not prove the converse, which is very hard and realistically out of reach. Also, in special cases $Sp(3)$-gauge groups over $S^8$ and $Sp(4)$-gauge groups over $S^{12}$ we obtain an upper bound for the number of homotopy types of $\mathcal{G}_{k,2}(Sp(3))$ and $\mathcal{G}_{k,3}(Sp(4))$, respectively. Put $t=(2n+1)2n(2n-1)\cdots (2n-2m+2)$. Let $P_{n,k}$ be the principal $Sp(n)$-bundles over $S^{4m}$ and $\mathcal{G}_{k,m}(Sp(n))$ be the gauge group of $P_{n,k}$ classified by $k\varepsilon'$, where $\varepsilon'$ is a generator of $\pi_{4m}(B(Sp(n)))\cong\mathbb{Z}$. We will prove the following theorems.
\begin{thm}\label{a2}
If $\mathcal{G}_{k,m}(Sp(n))$ is homotopy equivalent to $\mathcal{G}_{k',m}(Sp(n))$ then the following hold:\\
\textbf {Case one:} if $m$ is even integer, then
\begin{displaymath}
\left\{ \begin{array}{ll}
(k,4t)=(k',4t) & \textrm{if $n$ is even },\\\\
(k,t)=(k',t) & \textrm{if $n$ is odd }.\\
\end{array} \right.
\end{displaymath}
\textbf {Case two:} if $m$ is odd integer, then $(k,2t)=(k',2t)$,
\end{thm}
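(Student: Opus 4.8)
The plan is to pass from the gauge group to a boundary map via the evaluation fibration and then to read off the claimed gcd conditions from the order of that map, with Theorem \ref{a1} supplying the arithmetic. Recall that $B\mathcal{G}_{k,m}(Sp(n))$ is the component $\mathrm{Map}_{k}(S^{4m},BSp(n))$ of the classifying map $k\varepsilon'$, and that the evaluation fibration
\[
\mathrm{Map}^{*}_{0}(S^{4m},BSp(n))\longrightarrow \mathrm{Map}_{k}(S^{4m},BSp(n))\xrightarrow{\ \mathrm{ev}\ } BSp(n)
\]
presents $\mathcal{G}_{k,m}(Sp(n))$ as the homotopy fibre of a boundary map $\partial_{k}\colon Sp(n)\to \mathrm{Map}^{*}_{0}(S^{4m},BSp(n))\simeq\Omega^{4m-1}_{0}Sp(n)$. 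First I would record the two standard facts that make the problem tractable: that $\partial_{k}\simeq k\cdot\partial_{1}$ in the group $[Sp(n),\Omega^{4m-1}_{0}Sp(n)]$, and that the adjoint of $\partial_{1}$ is the Samelson product $\langle\varepsilon_{m,n},\mathrm{id}_{Sp(n)}\rangle$. Writing $N=\mathrm{ord}(\partial_{1})=\mathrm{ord}\langle\varepsilon_{m,n},\mathrm{id}_{Sp(n)}\rangle$, these give $\mathrm{ord}(\partial_{k})=N/\gcd(N,k)$, so the order of the boundary map is the homotopy invariant to track.

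The core is the forward implication. By the now-standard technique one feeds a homotopy equivalence $\mathcal{G}_{k,m}(Sp(n))\simeq\mathcal{G}_{k',m}(Sp(n))$ into the homotopy exact sequence of the fibration $\mathcal{G}_{k,m}(Sp(n))\to Sp(n)\xrightarrow{\partial_{k}}\Omega^{4m-1}_{0}Sp(n)$ and concludes that the order of the boundary map is preserved, hence $\gcd(N,k)=\gcd(N,k')$. It remains to bound $N$ from below, and this is exactly where Theorem \ref{a1} enters through the quasi-projective space: by naturality of the Samelson product, precomposing $\partial_{1}$ with the inclusion $\epsilon_{m,n}\colon Q_{n-m+1}\to Sp(n)$ is adjoint to $\langle\varepsilon_{m,n},\epsilon_{m,n}\rangle$, whose order is $M$, the value computed in Theorem \ref{a1}. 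Since the order of a restriction divides the order of the full map, $M\mid N$, and an elementary gcd manipulation then upgrades $\gcd(N,k)=\gcd(N,k')$ to $\gcd(M,k)=\gcd(M,k')$. As $M=2t$ when $m$ is odd and $M=t$ when $m$ is even, this settles Case two and the odd-$n$ subcase of Case one.

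The remaining case, $m$ even with $n$ even, is the one I expect to be the main obstacle, since there the claim demands the modulus $4t$ while the restriction to $Q_{n-m+1}$ only certifies $t\mid N$. The discrepancy is genuinely $2$-primary: the full product $\langle\varepsilon_{m,n},\mathrm{id}_{Sp(n)}\rangle$ carries order on cells of $Sp(n)$ lying outside the quasi-projective space, and this contribution is invisible to $\epsilon_{m,n}$. I would attack it $2$-locally, analysing $\partial_{1}$ on the symplectic cells immediately beyond $Q_{n-m+1}$ and using the hypothesis $n\equiv 0\ (\mathrm{mod}\ 2)$ together with the structure of $\pi_{*}(Sp(n))_{(2)}$ to exhibit the extra factor of $4$ in $\mathrm{ord}(\partial_{1})_{(2)}$; combined with the odd-primary information already furnished by Theorem \ref{a1}, this gives $4t\mid N$ and hence $\gcd(4t,k)=\gcd(4t,k')$. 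The delicate point is that Theorem \ref{a1} no longer applies on these higher cells, so the additional $2$-local factor must be produced directly from the relevant attaching maps and Samelson products, and one must check that it is present precisely when $n$ is even.
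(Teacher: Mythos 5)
Your argument has a fatal gap at its core, and it is precisely the step you label as ``the now-standard technique.'' You claim that a homotopy equivalence $\mathcal{G}_{k,m}(Sp(n))\simeq\mathcal{G}_{k',m}(Sp(n))$, fed into the homotopy exact sequence of the fibration $\mathcal{G}_{k,m}(Sp(n))\to Sp(n)\xrightarrow{\partial_k}\Omega^{4m-1}_{0}Sp(n)$, forces the order of the boundary map to be preserved, hence $\gcd(N,k)=\gcd(N,k')$ where $N=\mathrm{ord}(\partial_1)$. No such technique exists: the order of a map is not a homotopy invariant of its fibre. The long exact sequence only constrains homotopy groups of the fibre by extensions involving images and cokernels of $(\partial_k)_*$ on homotopy groups, and extracting arithmetic from those requires exactly the kind of explicit computation you have skipped. (If this step were available, the necessity direction of every gauge-group classification would follow from knowledge of the single integer $N$; the literature, including this paper, instead always proves necessity by computing concrete invariants.) What the paper actually does is compute the group $[\Sigma^{4n-4(m+1)}Q_2, B\mathcal{G}_{k,m}(Sp(n))]$: by Lemma \ref{d2} it is $\mathrm{Coker}\,(\alpha_k)_*$, and the $KSp$/$K$-theory and Chern character computations of Theorem \ref{d5}, Lemma \ref{d7} and Proposition \ref{d8} show (Theorem \ref{d9}) that it is cyclic of order exactly $(k,4t)$, $(k,t)$ or $(k,2t)$ in the respective cases. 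Since $[\Sigma^{4n-4(m+1)}Q_2, B\mathcal{G}_{k,m}(Sp(n))]\cong[\Sigma^{4n-4m-5}Q_2, \mathcal{G}_{k,m}(Sp(n))]$ and the domain is a suspension, this is a genuine invariant of the homotopy type of the gauge group, and comparing orders proves the theorem. Note that Theorem \ref{a1} is not an input to this argument at all, whereas your plan leans on it essentially.

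The second gap is the case you yourself flag as the main obstacle: $m$ even, $n$ even, where the claimed modulus is $4t$ while the restricted Samelson product of Theorem \ref{a1} only certifies $t$. Your proposed repair --- a $2$-local analysis of $\partial_1$ on the cells of $Sp(n)$ beyond $Q_{n-m+1}$ --- is a statement of intent, not an argument: no attaching maps are identified, no $2$-local homotopy computation is carried out, and no mechanism is exhibited that produces a factor $4$ precisely when $n$ is even. In the paper this factor does not come from cells beyond the quasi-projective space at all; it falls out of the cokernel computation over $\Sigma^{4n-4(m+1)}Q_2$: for $n$ even the group $[\Sigma^{4n-5}Q_2,Sp(n)]$ has order $\frac{2}{3!}(2n+1)!$ (twice the order occurring for $n$ odd, Theorem \ref{d5}), while $\mathrm{Im}\,(\alpha_k)_*\cong\mathbb{Z}/((2n+1)!/(3(k,4t)))$ (Proposition \ref{d8}), and the quotient is $\mathbb{Z}_{(k,4t)}$. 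So even granting your (unproved) core reduction, the hardest case of the statement remains unproved in your proposal.
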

\begin{thm}\label{a3}
Localized at odd primes, the following hold:\\
$(a)\colon$ if $(3^3\cdot 5\cdot 7, k)=(3^3\cdot 5\cdot 7, k')$ then $\mathcal{G}_{k,2}(Sp(3))$ is homotopy equivalent to $\mathcal{G}_{k',2}(Sp(3))$,\\
$(b)\colon$ if $(3^4\cdot 5^2\cdot 7\cdot 11, k)=(3^4\cdot 5^2\cdot 7\cdot 11, k')$ then $\mathcal{G}_{k,3}(Sp(4))$ is homotopy equivalent to $\mathcal{G}_{k',3}(Sp(4))$.
\end{thm}
\section{Preliminaries}\label{b0}
Let $Map_k(S^{4m}, BSp(n))$ be the component of the space of continuous unbased maps from $S^{4m}$ to $BSp(n)$ which contains the map inducing $P$, similarly let $Map_k^\ast(S^{4m}, BSp(n))$  be the spaces of  pointed continuous maps from $S^{4m}$ and  $BSp(n)$ which contains the map inducing $P$. We know that there is a fibration
$$Map_k^*(S^{4m}, BSp(n))\rightarrow Map_k(S^{4m}, BSp(n)) \overset{ev}\longrightarrow  BSp(n),$$
where the map $ev$ is evaluation map at the basepoint of $S^{4m}$. Let $B\mathcal{G}_{k,m}(Sp(n))$ be the classifying spaces of  $\mathcal{G}_{k,m}(Sp(n))$. By Atiyah-Bott \cite{[AB]}, there is a homotopy equivalence
$$B\mathcal{G}_{k,m}(Sp(n))\simeq Map_k(S^{4m}, BSp(n)).$$
The evaluation fibration therefore determines a homotopy fibration sequence
\begin{eqnarray}
Sp(n)\overset{\alpha_k}\longrightarrow Map_k^*(S^{4m}, BSp(n))\rightarrow  B\mathcal{G}_{k,m}(Sp(n)) \overset{ev}\longrightarrow BSp(n).
\end{eqnarray}
It is well known that there is a homotopy equivalence 
$$Map_k^*(S^{4m}, BSp(n))\simeq Map_0^*(S^{4m}, BSp(n)),$$
we writing $\Omega_{0}^{4m-1}Sp(n)$ for $Map_0^*(S^{4m}, BSp(n))$. Let $G$ be the topological group and let $c\colon G\wedge G \rightarrow G$ denote the commutator of $G$.
The Samelson product of two maps $\alpha\colon X \rightarrow G$ and $\beta\colon Y\rightarrow G$ denoted by $\langle \alpha, \beta \rangle$ and defined to be the composition $\langle \alpha, \beta \rangle\colon X\wedge Y \overset{\alpha\wedge\beta}\longrightarrow  G\wedge G \overset{c}\longrightarrow G$. Let $\varepsilon_{m,n}\colon S^{4m-1}\rightarrow Sp(n)$ be the represents the generator of $\pi_{4m-1}(Sp(n))$.  For an $H$-space $X$, let $k\colon X\rightarrow X$ be the $k^{th}$-power map. By \cite{[L]} we have the following lemma.
\begin{lem}\label{b1}
The adjoint of the connecting map $Sp(n)\overset{\alpha_k}\longrightarrow \Omega_{0}^{4m-1}Sp(n)$ is homotopic to the Samelson product $S^{4m-1} \wedge Sp(n) \overset{\langle k\varepsilon_{m,n},1\rangle}\longrightarrow Sp(n)$, where $1: Sp(n) \rightarrow Sp(n)$ is the identity map on $Sp(n)$. $\quad \Box$
\end{lem}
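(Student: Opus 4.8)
The plan is to compute the connecting map $\alpha_k$ via the fibre-transport description of the boundary map of a fibration, and then to pass to adjoints. Since $S^{4m}=\Sigma S^{4m-1}$, the exponential law identifies $Map_0^*(S^{4m},BSp(n))$ with $Map_0^*(S^{4m-1},\Omega BSp(n))=\Omega_0^{4m-1}Sp(n)$, and under this identification a map $Sp(n)\to\Omega_0^{4m-1}Sp(n)$ corresponds to a map $S^{4m-1}\wedge Sp(n)\to Sp(n)$. It therefore suffices to show that the adjoint $\widehat{\alpha}_k$ of the connecting map is the commutator composite
\[
S^{4m-1}\wedge Sp(n)\xrightarrow{\,k\varepsilon_{m,n}\wedge 1\,}Sp(n)\wedge Sp(n)\xrightarrow{\,c\,}Sp(n),
\]
which is by definition the Samelson product $\langle k\varepsilon_{m,n},1\rangle$.

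First I would recall that, in the fibration sequence $Sp(n)\xrightarrow{\alpha_k}Map_k^*(S^{4m},BSp(n))\to Map_k(S^{4m},BSp(n))\xrightarrow{ev}BSp(n)$, the boundary map $\alpha_k$ sends a point $g\in Sp(n)=\Omega BSp(n)$, viewed as a loop $\omega$ in $BSp(n)$, to the endpoint of a lift of the path $\omega$ through the evaluation fibration starting at a fixed classifying map $f_k$ of $P_{n,k}$. To make the lift explicit I would use the clutching description of $P_{n,k}$: writing $S^{4m}=D_+\cup_{S^{4m-1}}D_-$ with clutching function $k\varepsilon_{m,n}$, a gauge transformation is a pair of maps $D_\pm\to Sp(n)$ agreeing on the equator $S^{4m-1}$ up to conjugation by $k\varepsilon_{m,n}$. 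Transporting the basepoint value of $f_k$ along $\omega$ then produces, at the endpoint, a based map whose behaviour across the equator is forced to differ by conjugating $k\varepsilon_{m,n}$ by $g$.

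Carrying this out, the endpoint $\alpha_k(g)$ is a based map $S^{4m}\to BSp(n)$ whose adjoint $S^{4m-1}\to Sp(n)$ is the pointwise commutator $x\mapsto c(k\varepsilon_{m,n}(x),g)=k\varepsilon_{m,n}(x)\,g\,k\varepsilon_{m,n}(x)^{-1}g^{-1}$. Letting $g$ range over $Sp(n)$ and desuspending assembles these into the map $(x,g)\mapsto c(k\varepsilon_{m,n}(x),g)$ on $S^{4m-1}\wedge Sp(n)$; since the commutator is trivial on each axis it factors through the smash product and equals $c\circ(k\varepsilon_{m,n}\wedge 1)=\langle k\varepsilon_{m,n},1\rangle$. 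The main obstacle is this middle step: building the path-lift explicitly and checking that the obstruction to keeping it based is exactly the commutator and not merely a product or a conjugate. This is the technical heart of Lang's argument in \cite{[L]}, and one must verify that the three identifications in play — the mapping-space adjunction, the clutching description of the gauge group, and the orientation conventions on $S^{4m-1}\wedge Sp(n)$ — are mutually compatible, so that no spurious self-map of $Sp(n)$ is introduced.
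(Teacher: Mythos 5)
Your outline follows the right strategy, but as written it has a genuine gap at exactly the point where the lemma's content lies. The sentence beginning ``Carrying this out, the endpoint $\alpha_k(g)$ is a based map whose adjoint is the pointwise commutator'' is not a computation; it is precisely the statement to be proved, namely that transporting $f_k$ around the loop $g$ twists the classifying map by the commutator $c(k\varepsilon_{m,n}(x),g)$ rather than by a conjugation, a translation, or some other self-map. You concede this yourself in the closing paragraph (``the main obstacle is this middle step \dots the technical heart of Lang's argument''), so the proposal in effect reduces the lemma to the very result it claims to establish. A second, related problem is that the argument mixes two models without reconciling them: the path-lifting that defines the connecting map takes place in the mapping space $Map_k(S^{4m},BSp(n))$, whereas the clutching description (pairs of maps $D_\pm\to Sp(n)$ agreeing on the equator up to conjugation by $k\varepsilon_{m,n}$) describes the gauge group $\mathcal{G}_{k,m}(Sp(n))$; passing between the two is the Atiyah--Bott equivalence $B\mathcal{G}_{k,m}(Sp(n))\simeq Map_k(S^{4m},BSp(n))$, and carrying the lift across that equivalence without introducing a spurious automorphism is part of the work, not a bookkeeping remark.

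For comparison, the paper does not prove this lemma at all: it is quoted directly from Lang \cite{[L]}, who identifies the connecting map of the evaluation fibration $Map^*(X,Y)\rightarrow Map(X,Y)\overset{ev}\longrightarrow Y$ with a Whitehead-product pairing; taking $Y=BSp(n)$, $X=S^{4m}$, using $\Omega BSp(n)\simeq Sp(n)$ and adjointing converts that Whitehead product into the Samelson product $\langle k\varepsilon_{m,n},1\rangle$. Lang's computation is done in the mapping-space model (via the coaction on the sphere), not via clutching functions, and it is exactly the step your sketch leaves open. So either complete the middle step honestly --- construct the lift $H_t$ with $H_t(\ast)=\omega(t)$, compare its endpoint $H_1$ with $f_k$ under the translation $Map_k^*(S^{4m},BSp(n))\simeq Map_0^*(S^{4m},BSp(n))$, and verify that the difference class is the commutator --- or do what the paper does and invoke Lang's theorem. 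As it stands, the proposal is a correct plan for a proof, not a proof.
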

The linearity of the Samelson product implies that $\langle k\varepsilon_{m,n},1\rangle\simeq k\langle \varepsilon_{m,n},1\rangle$. Thaking adjoints therefore implies the following.\\
\textbf{Corollary 2.2.}
The connecting map $\alpha_k$ satisfies $\alpha_k\simeq k\circ \alpha_1$. $\quad \Box$\par
The organization of this article is as follows. In Section \ref{c0}, we will study the group $[\Sigma^{4n-5}Q_2, Sp(n)]$. In Section \ref{d0}, we will study the group  $[\Sigma^{4n-4(m+1)}Q_2, B\mathcal{G}_{k,m}(Sp(n))]$ and will prove Theorem \ref{a2}. Also, in Section \ref{e0}, we will prove Theorem \ref{a3}.
\section{The group $[\Sigma^{4n-5}Q_2, Sp(n)]$}\label{c0}
Our main goal in this section is to study the group $[\Sigma^{4n-5}Q_2, Sp(n)]$. We denote $Sp(\infty)/Sp(n)$ by $X_n$ and $[X,Sp(n)]$ by $Sp_n(X)$. We recall that the symplectic quasi projective space $Q_2$ has the cellular structure $ Q_2=S^3\cup_{v_1}e^7$, where $v_1\in \pi_6(S^3)\cong \mathbb{Z}_{12}$. Recall that the cohomologies of $Sp(n)$, $Sp(\infty)$ and $BSp(\infty)$ as an algebra are given by 
\begin{align*}
&H^*(Sp(n);\mathbb{Z})= \bigwedge (y_3,y_7,\cdots, y_{4n-1}),\\
& H^*(Sp(\infty);\mathbb{Z})= \bigwedge (y_3,y_7,\cdots),\\
&H^*(BSp(\infty);\mathbb{Z})= \mathbb{Z}[q_1,q_2,\cdots],
\end{align*}
where $y_{4i-1}=\sigma q_i$, $\sigma$ is the cohomology suspension and $q_i$ is the $i-$th universal symplectic Pontrjagin class. Consider the projection map $\pi:Sp(\infty)\rightarrow X_n$, also the cohomologies of $X_n$ and $\Omega X_n$ as an algebra are given by
\begin{align*}
& H^*(X_n;\mathbb{Z})= \bigwedge (\bar{y}_{4n+3}, \bar{y}_{4n+7}, \cdots), \\
& H^*(\Omega X_n;\mathbb{Z})= \mathbb{Z}\{b_{4n+2}, b_{4n+6}, \cdots, b_{8n+2} \} \quad (*\leq 8n+2 ),
\end{align*}
where $\pi^*(\bar{y}_{4i+3})=y_{4i+3}$ and $b_{4n+4j-2}=\sigma (\bar{y}_{4n+4j-1})$. Put $X=\Sigma^{4n-5}Q_2$. Note that $X$ has a cellular structure as $X\simeq S^{4n-2}\cup e^{4n+2}$. Consider the following fibre sequence
\begin{eqnarray}
\Omega Sp(\infty)\overset{\Omega \pi}\longrightarrow \Omega X_n\overset{\delta}\longrightarrow Sp(n) \overset{j}\longrightarrow Sp(\infty)\overset{\pi}\longrightarrow X_n.
\end{eqnarray}
Now applying the functor $[X,\quad]$ to fibration sequence $(3.1)$, there is an exact sequence
$$[X,\Omega Sp(\infty)]\overset{(\Omega \pi)_*}\longrightarrow [X,\Omega X_n]\overset{\delta_*}\longrightarrow Sp_n(X) \overset{j_*}\longrightarrow [X,Sp(\infty)]\overset{\pi_*}\longrightarrow [X,X_n].$$
Note that $X_n$ has a cellular structure as following
$$X_n\simeq S^{4n+3}\cup_{\eta'} e^{4n+7}\cup e^{4n+11}\cup \cdots,$$ where $\eta'$ is the generator of $\pi_{4n+6}(S^{4n+3})$. Also, we have
$$\Omega X_n\simeq S^{4n+2}\cup e^{4n+6}\cup e^{4n+10}\cup \cdots.$$
According to the $CW$-structure of $X_n$ we have the following isomorphisms
\begin{equation*}
\pi_i(X_n)=0 \quad (for\quad i\leq 4n+2), \qquad \pi_{4n+3}(X_n)\cong \mathbb{Z}.
\end{equation*}
We use the isomorphism $[\Sigma^iZ,BSp(\infty)]\cong {\widetilde{KSp}}^{-i}(Z)$. Observe that
$$[\Sigma^{4n-5}Q_2, Sp(\infty)]\cong [\Sigma^{4n-4}Q_2, BSp(\infty)]\cong {\widetilde{KSp}}^{-1}(\Sigma^{4n-5}Q_2).$$
Consider the cofibration sequence
$$S^{4n-2}\rightarrow \Sigma^{4n-5}Q_2\rightarrow S^{4n+2}. \quad (*) $$
Applying ${\widetilde{KSp}}^{-1}$ to the cofibration sequence $(*)$, we know that ${\widetilde{KSp}}^{-1}(S^{4i+2})\cong 0$ for $i\geq 1$. Thus we can conclude that ${\widetilde{KSp}}^{-1}(\Sigma^{4n-5}Q_2)\cong 0$.\par
On the other hand we know that $\Omega X_n$ is $(4n+1)$-connected and $H^{4n+2}(\Omega X_n)\cong \mathbb{Z}$ which is generated by $b_{4n+2}=\sigma(\bar{y}_{4n+3})$. The map $b_{4n+2}\colon\Omega X_n \rightarrow K(\mathbb{Z},4n+2)$ is a loop map and is a $(4n+3)$-equivalence. Since dim$X\leq 4n+2$, the map $(b_{4n+2})_*\colon[X,\Omega X_n] \rightarrow H^{4n+2}(X)$ is an isomorphism of groups. Thus we get the following exact sequence
\begin{eqnarray*}
{\widetilde{KSp}}^{-2}(X)\overset{\vartheta}\longrightarrow H^{4n+2}(X)\rightarrow Sp_n(X)\rightarrow 0.
\end{eqnarray*}
Therefore we get the following lemma.
\begin{lem}\label{c1}
There is an isomorphism $Sp_n(X)\cong Coker \vartheta$. $\quad \Box$
\end{lem}
In what follows this section, we will calculate the image of map $\vartheta$.\par
Let $Y$ be a $CW$-complex with dim $Y \leq 4n+2$, we denote $[Y, U(2n+1)]$ by $U_{2n+1}(Y)$. By Theorem $1.1$ in \cite{[Hk]}, there is an exact sequence
\begin{equation*}
\tilde{K}^{-2}(Y)\overset{\varphi}\longrightarrow H^{4n+2}(Y)\rightarrow U_{2n+1}(Y)\rightarrow \tilde{K}^{-1}(Y)\rightarrow 0,
\end{equation*}
for any $f\in \tilde{K}^{-2}(Y)$ the map $\varphi$ is define as $\varphi(f)=(2n+1)!ch_{2n+1}(f)$, where $ch_{4n+2}(f)$ is the $(4n+2)$-th part of $ch(f)$. Also, we use the isomorphism $\tilde{K}^{-i}(Y)\cong [\Sigma^iY,BU(\infty)]$. In this paper, we denote both the canonical inclusion $Sp(n)\hookrightarrow U(2n)$ and the induced
map ${\widetilde{KSp}}^{*}(X) \rightarrow {\tilde{K}}^{*}(X)$ by $c'$. By Theorem $1.3$ in \cite{[N]}, there is a following commutative diagram
\begin{equation}
\begin{tikzpicture}[baseline=(current bounding box.center)]
\matrix(m)[matrix of math nodes,
 row sep=2em, column sep=2em,
 text height=1.5ex, text depth=0.20ex]
   {{\widetilde{KSp}}^{-2}(\Sigma^{4n-5}Q_2)& H^{4n+2}(\Sigma^{4n-5}Q_2)\\ {\tilde{K}}^{-2}(\Sigma^{4n-5}Q_2)& H^{4n+2}(\Sigma^{4n-5}Q_2)\\}; \path[->,font=\scriptsize]
  (m-1-1) edge node[above] {$\vartheta$}(m-1-2)
  (m-1-1) edge node[left] {$c'$} (m-2-1)
  (m-1-2) edge node[right] {$(-1)^{n+1}$}(m-2-2)
      (m-2-1) edge node[above] {$\phi$} (m-2-2);
 \end{tikzpicture}
 \end{equation}
Therefore to calculate the image of $\vartheta$, we first calculate the image of $\phi$. We denote the free abelian group with a basis $e_1,e_2,\cdots$, by $\mathbb{Z}\{ e_1,e_2,\cdots \}$. We need the following lemma.
\begin{lem}\label{c2}
The following hold:\\
$(a)\colon {\widetilde{KSp}}^{-2}(\Sigma^{4n-5} Q_{2})=\mathbb{Z}\{\alpha,\beta\}$, where $\alpha\in {\widetilde{KSp}}^{-2}(S^{4n-2})$ and $\beta\in {\widetilde{KSp}}^{-2}(S^{4n+2})$,\\
$(b)\colon {\tilde{K}}^{-2}(\Sigma^{4n-5} Q_{2})=\mathbb{Z}\{\alpha' ,\beta'\}$, where $\alpha'\in {\tilde{K}}^{-2}(S^{4n-2})$ and $\beta'\in {\tilde{K}}^{-2}(S^{4n+2})$,\\
(c): we have
\begin{align*}
c'(\alpha) = \left\{ \begin{array}{ll}
2\alpha' & \textrm{$n$ is even },\\\\
\alpha' & \textrm{$n$ is odd },
\end{array} \right. &  c'(\beta) = \left\{ \begin{array}{ll}
\beta' & \textrm{$n$ is even },\\\\
2\beta' & \textrm{$n$ is odd }.
\end{array} \right.
\end{align*}
\end{lem}
\begin{proof}
Put $L={\widetilde{KSp}}^{-2}(S^{4n-1})$ and $L'={\widetilde{KSp}}^{-2}(S^{4n+1})$. The cofibration sequence $(*)$ induces the following commutative diagram of exact sequences
\begin{equation*}
\begin{tikzpicture}[baseline=(current bounding box.center)]
\matrix(m)[matrix of math nodes,
 row sep=2em, column sep=2em,
 text height=1.5ex, text depth=0.20ex]
  {L&{\widetilde{KSp}}^{-2}(S^{4n+2})&{\widetilde{KSp}}^{-2}(\Sigma^{4n-5}Q_2)&{\widetilde{KSp}}^{-2}(S^{4n-2})
  &L'\\0&{\tilde{K}}^{-2}(S^{4n+2})&{\tilde{K}}^{-2}(\Sigma^{4n-5}Q_2)&{\tilde{K}}^{-2}(S^{4n-2})&0,\\}; \path[->,font=\scriptsize]
  (m-1-1) edge (m-1-2)
  (m-1-2) edge (m-1-3)
  (m-1-3) edge (m-1-4)
  (m-1-4) edge (m-1-5)
  (m-1-2) edge node[left] {$c'$}(m-2-2)
  (m-1-3) edge node[left] {$c'$}(m-2-3)
  (m-1-4) edge node[left] {$c'$}(m-2-4)
      (m-2-1) edge (m-2-2)
  (m-2-2) edge (m-2-3)
  (m-2-3) edge (m-2-4)
  (m-2-4) edge (m-2-5);
 \end{tikzpicture}
 \end{equation*}
where $L'$ is zero and $L$ is $\mathbb{Z}/2\mathbb{Z}$ when $n$ is even and is zero when $n$ is odd. Since ${\widetilde{KSp}}^{-2}(S^{4i+2})$ is isomorphic to $\mathbb{Z}$, thus we have
$${\widetilde{KSp}}^{-2}(\Sigma^{4n-5}Q_2)=\mathbb{Z}\{\alpha,\beta\}, \quad {\tilde{K}}^{-2}(\Sigma^{4n-5}Q_2)=\mathbb{Z}\{\alpha',\beta'\},$$
where $\alpha\in {\widetilde{KSp}}^{-2}(S^{4n-2})$, $\beta\in {\widetilde{KSp}}^{-2}(S^{4n+2})$, also $\alpha'\in {\tilde{K}}^{-2}(S^{4n-2})$ and $\beta'\in {\tilde{K}}^{-2}(S^{4n+2})$. We can choose $\alpha, \alpha', \beta, \beta'$ such that when $n$ is even then $c'(\alpha)=2\alpha', c'(\beta)=\beta'$ and when $n$ is odd then $c'(\alpha)=\alpha', c'(\beta)=2\beta'$.
\end{proof}
Consider the map $c'\colon Sp(n)\rightarrow SU(2n)$, restriction the map $c'$ to their quasi-projective spaces we get to map ${\bar{c}}'\colon Q_2\rightarrow \Sigma\mathbb{C}P^3$. The cohomologies of $Q_2$ and $\Sigma\mathbb{C}P^3$ are given by
$$H^* (Q_2)=\mathbb{Z}\{{\bar{y}}_3,{\bar{y}}_7\},\qquad H^* (\Sigma\mathbb{C}P^3)=\mathbb{Z}\{{\bar{x}}_3,{\bar{x}}_5, {\bar{x}}_7\},$$
such that ${\bar{c}}'({\bar{x}}_3)={\bar{y}}_3$, ${\bar{c}}'({\bar{x}}_5)=0$ and $ {\bar{c}}'({\bar{x}}_7)={\bar{y}}_7$. Note that ${\tilde{K}}^{-2}(\Sigma^{4n-4}\mathbb{C}P^3)\cong {\tilde{K}}^{0}(\Sigma^{4n-2}\mathbb{C}P^3)$ is a free abelian group generated by $\zeta_{2n-1}\otimes x$, $\zeta_{2n-1}\otimes x^2$ and $\zeta_{2n-1}\otimes x^3$. According to the map ${\bar{c}}'\colon {\tilde{K}}^{-2}(\Sigma^{4n-4}\mathbb{C}P^3)\rightarrow {\tilde{K}}^{-2}(\Sigma^{4n-5} Q_2)$, we can put $\alpha', \beta'$ such that $\alpha'={\bar{c}}'(\zeta_{2n-2}\otimes x)$ and $\beta'={\bar{c}}'(\zeta_{2n-2}\otimes x^3)$. We have the following proposition.
\begin{prop}\label{c3}
The image of map ${\widetilde{KSp}}^{-2}(\Sigma^{4n-5}Q_2)\overset{\vartheta}\longrightarrow H^{4n+2}(\Sigma^{4n-5}Q_2)$ is isomorphic to\\
\begin{displaymath}
 \left\{ \begin{array}{ll}
\mathbb{Z}\{\frac{1}{3}(2n+1)!\sigma^{4n-5}\otimes \bar{y}_7\} & \textrm{if $n$ is even },\\\\
\mathbb{Z}\{\frac{1}{6}(2n+1)!\sigma^{4n-5}\otimes \bar{y}_7\} & \textrm{if $n$ is odd }.\\
\end{array} \right.
\end{displaymath}
\end{prop}
\begin{proof}
Consider the following commutative diagram
\begin{equation}
\begin{tikzpicture}[baseline=(current bounding box.center)]
\matrix(m)[matrix of math nodes,
 row sep=2em, column sep=2em,
 text height=1.5ex, text depth=0.20ex]
{{\tilde{K}}^{-2}(\Sigma^{4n-4}\mathbb{C}P^3)& H^{4n+2}(\Sigma^{4n-4}\mathbb{C}P^3)\\ {\tilde{K}}^{-2}(\Sigma^{4n-5} Q_2)& H^{4n+2}(\Sigma^{4n-5} Q_2),\\}; \path[->,font=\scriptsize]
  (m-1-1) edge node[above] {$\phi'$}(m-1-2)
  (m-1-1) edge node[left] {${\bar{c}}'$} (m-2-1)
  (m-1-2) edge node[right] {$\cong$}(m-2-2)
      (m-2-1) edge node[above] {$\phi$} (m-2-2);
 \end{tikzpicture}
\end{equation}
where the map $\phi'$ is defined similarly to the map $\varphi$. By definition of the map of $\phi'$ we have
\begin{align*}
&\phi'(\zeta_{2n-2}\otimes x)=(2n+1)!ch_{2n+1}(\zeta_{2n-2}\otimes x)=
\frac{1}{3!}(2n+1)!\sigma^{4n-4}t^3,\\
&\phi'(\zeta_{2n-2}\otimes x^2)=(2n+1)!ch_{2n+1}(\zeta_{2n-2}\otimes x^2)=
\frac{1}{2}(2n+1)!\sigma^{4n-4}t^3,\\
&\phi'(\zeta_{2n-2}\otimes x^3)=(2n+1)!ch_{2n+1}(\zeta_{2n-2}\otimes x^3)=
2(2n+1)!\sigma^{4n-4}t^3.
\end{align*}
Now according to the commutativity of diagram $(3.3)$ we have
\begin{align*}
&\phi(\alpha')=\phi({\bar{c}}'(\zeta_{2n-2}\otimes x))=\phi'(\zeta_{2n-2}\otimes x)=
\frac{1}{3!}(2n+1)!\sigma^{4n-5}\otimes \bar{y}_7,\\
&\phi(\beta')=\phi({\bar{c}}'(\zeta_{2n-2}\otimes x^3))=\phi'(\zeta_{2n-2}\otimes x^3)=
2(2n+1)!\sigma^{4n-5}\otimes \bar{y}_7.
\end{align*}
Thus by the commutativity of diagram $(3.2)$ we obtain the following two cases
\begin{displaymath}
\vartheta(\alpha)= \phi(c'(\alpha))= \left\{ \begin{array}{ll}
-\frac{1}{3}(2n+1)!\sigma^{4n-5}\otimes \bar{y}_7 & \textrm{if $n$ is even},\\\\
\frac{1}{6}(2n+1)!\sigma^{4n-5}\otimes \bar{y}_7 & \textrm{if $n$ is odd },\\
\end{array} \right.
\end{displaymath}
and
\begin{displaymath}
\vartheta(\beta)= \phi(c'(\beta))= \left\{ \begin{array}{ll}
-2(2n+1)!\sigma^{4n-5}\otimes \bar{y}_7 & \textrm{if $n$ is even},\\\\
4(2n+1)!\sigma^{4n-5}\otimes \bar{y}_7 & \textrm{if $n$ is odd}.\\
\end{array} \right.
\end{displaymath}
Therefore we can conclude that
\begin{displaymath}
Im \vartheta \cong \left\{ \begin{array}{ll}
\mathbb{Z}\{\frac{1}{3}(2n+1)!\sigma^{4n-5}\otimes \bar{y}_7\} & \textrm{if $n$ is even},\\\\
\mathbb{Z}\{\frac{1}{6}(2n+1)!\sigma^{4n-5}\otimes \bar{y}_7\} & \textrm{if $n$ is odd }.\\
\end{array} \right.
\end{displaymath}
\end{proof}
Thus by Lemma \ref{c1} and Proposition \ref{c3}, we get the following theorem.
\begin{thm}\label{c4}
There is an isomorphism
\begin{displaymath}
[\Sigma^{4n-5}Q_2,Sp(n)]\cong \left\{ \begin{array}{ll}
\mathbb{Z}_{\frac{2}{3!}(2n+1)!} & \textrm{if $n$ is even},\\\\
\mathbb{Z}_{\frac{1}{3!}(2n+1)!} & \textrm{if $n$ is odd}.\quad \quad \Box\\
\end{array} \right.
\end{displaymath}
\end{thm}
\section{Proof of Theorem \ref{a2}}\label{d0}
In this section we will prove Theorem \ref{a2}. Put $A=\Sigma^{4n-4m-5}Q_2$ and $X'=S^{4m}\wedge A$. First, we need the following lemmas.
\begin{lem}\label{d1}
Let $\mu\colon\Sigma A\rightarrow Sp(n)$ be an element of $\widetilde{KSp}(\Sigma^2 A)$ and $\mu'\colon S^{4m}\wedge A\rightarrow Sp(n)$ be the adjoint of the following composition
$$ S^{4m}\wedge \Sigma A \overset {\Sigma \varepsilon_{m,n}\wedge \mu}\longrightarrow \Sigma Sp(n)\wedge Sp(n)\overset {[ev,ev]}\longrightarrow BSp(n).$$
Then there is a lift $\tilde{\mu}\colon S^{4m}\wedge A \rightarrow \Omega X_n$ of $\mu'$ such that ${\tilde{\mu}}^*(a_{4n+2})=\sigma^{4m}\otimes \Sigma^{-1} {\mu}^*(y_{4(n-m+1)-1})$.
\end{lem}
\begin{proof}
By Lemma $3.1$ in \cite{[N]}, we know that there is a lift $\lambda\colon\Sigma Sp(n)\wedge Sp(n)\longrightarrow X_{n}$ of $[ev,ev]$ such that $\lambda^*(\bar{y}_{4n+3})=\sum\limits_{i+j=n+1}\Sigma y_{4i-1}\otimes y_{4j-1}$. Let $\tilde{\lambda}$ be the composition $\tilde{\lambda}\colon  S^{4m}\wedge \Sigma A \overset {\Sigma \varepsilon_{m,n}\wedge \mu}\longrightarrow \Sigma Sp(n)\wedge Sp(n)\overset {\lambda}\longrightarrow X_{n}$. We have
\begin{align*}
{\tilde{\lambda}}^*(\bar{y}_{4n+3})&=(\Sigma \varepsilon_{m,n}\wedge \mu)^*{\lambda}^*(\bar{y}_{4n+3})\\&= (\Sigma \varepsilon_{m,n}\wedge \mu)^*(\sum\limits_{i+j=n+1}\Sigma y_{4i-1}\otimes y_{4j-1})\\&=
\sigma^{4m}\otimes {\mu}^* (y_{4(n-m+1)-1}).
\end{align*}
Now we take $\tilde{\mu}\colon S^{4m}\wedge A \longrightarrow \Omega X_{n}$ to be the adjoint of the following composition
$$\Sigma S^{4m}\wedge A \overset {S}\longrightarrow S^{4m}\wedge \Sigma A\overset {\tilde{\lambda}}\longrightarrow X_{n},$$
that is $\tilde{\mu}\colon ad (\tilde{\lambda}\circ S)$, where the map $S\colon\Sigma S^{4m}\wedge A \longrightarrow S^{4m}\wedge \Sigma A $ is the swapping map and the map $ad\colon [\Sigma S^{4m}\wedge A, X_{n} ]\longrightarrow [S^{4m}\wedge A, \Omega X_{n}] $ is the adjunction. Then $\tilde{\mu}$ is a lift of $\mu$. Note that
\begin{align*}
(\tilde{\lambda}\circ S)^*(\bar{y}_{4n+3})=S^* \circ \tilde{\lambda}^*(\bar{y}_{4n+3})&=
S^*(\sigma^{4m}\otimes {\mu}^*(y_{4(n-m+1)-1}))\\&= \sigma^{4m+1} \otimes \Sigma^{-1} {\mu}^*(y_{4(n-m+1)-1}),
\end{align*}
thus we get ${\tilde{\mu}}^*(a_{4n+2})=\sigma^{4m}\otimes \Sigma ^{-1} {\mu}^* (y_{4(n-m+1)-1})$.
\end{proof}
Apply the functor $[\Sigma^{4n-4(m+1)}Q_2,\quad ]$ to fibration sequence $(2.1)$, we get the following exact sequence
\begin{align*}
[\Sigma^{4n-4(m+1)}Q_2,Sp(n)]\overset{(\alpha_k)_*}\longrightarrow [\Sigma^{4n-5}Q_2,Sp(n)]
&\longrightarrow[\Sigma^{4n-4(m+1)}Q_2,B\mathcal{G}_{k,m}(Sp(n))]\\
&\longrightarrow [\Sigma^{4n-4(m+1)}Q_2, BSp(n)]. \quad (**)
\end{align*}
In the following lemma, we show that $[\Sigma^{4n-4(m+1)}Q_2, BSp(n)]$ is isomorphic to zero.
\begin{lem}\label{d2}
There is an isomorphism $[\Sigma^{4n-4(m+1)}Q_2, BSp(n)]\cong 0$.
\end{lem}
\begin{proof}
Since the dimension of $\Sigma^{4n-4(m+1)}Q_2$ is equal to $4(n-m)+3$, we have
$$[\Sigma^{4n-4(m+1)}Q_2, BSp(n)]\cong [\Sigma^{4n-4(m+1)}Q_2, BSp(\infty)] \cong \widetilde{KSp}(\Sigma^{4n-4(m+1)}Q_2).$$
The cofibration sequence $ S^{4(n-m)-1}\rightarrow \Sigma^{4n-4(m+1)}Q_2\rightarrow S^{4(n-m)+3}$ induces the following exact sequence
$$\cdots \rightarrow \widetilde{KSp}(S^{4(n-m)+3})\rightarrow \widetilde{KSp}(\Sigma^{4n-4(m+1)}Q_2)\rightarrow \widetilde{KSp}(S^{4(n-m)-1})\rightarrow \cdots.$$
Since $\widetilde{KSp}(S^{4i-1})\cong 0$ for $i \geq 1$, we conclude $\widetilde{KSp}(\Sigma^{4n-4(m+1)}Q_2)\cong 0$.
\end{proof}
Therefore by Lemma \ref{d2}, we get the following lemma.
\begin{lem}\label{d3}
There is an isomorphism $[\Sigma^{4n-4(m+1)}Q_2, B\mathcal{G}_{k,m}(Sp(n))]\cong Coker(\alpha_k)_*$. $\quad \Box$
\end{lem}
In what follows this section, we will compute the Im $(\alpha_k)_*$ and then will prove Theorem \ref{a2}. Consider the map $\theta:\Sigma A\rightarrow Sp(n)$, then by Lemma \ref{d1} there is a lift $\tilde{\theta}$ of $(\alpha_k)_*(\theta)$ such that we have
$${\tilde{\theta}}^*(a_{4n+2})=\sigma^{4m}\otimes \Sigma ^{-1} {\theta}^* (y_{4(n-m+1)-1}).$$
We define the map $\beta_k\colon \widetilde{KSp}(\Sigma^2A) \rightarrow H^{4n+2}(X')$ by $\beta_k(\theta)={\tilde{\theta}}^*(a_{4n+2})=a_{4n+2}\circ \tilde{\theta}$. Now, Consider the following commutative diagram
\begin{equation*}
\begin{tikzpicture}[baseline=(current bounding box.center)]
\matrix(m)[matrix of math nodes,
 row sep=2em, column sep=2em,
 text height=1.5ex, text depth=0.15ex]
{{[\Sigma^{4n-4(m+1)}Q_2,\mathcal{G}_k]}& {[\Sigma^{4n-4(m+1)}Q_2,Sp(n)]}&{[\Sigma^{4n-5}Q_2,Sp(n)]}&\quad\\ {\widetilde{KSp}}^{-2}(\Sigma^{4n-5} Q_2)& H^{4n+2}(\Sigma^{4n-5} Q_2)&{[\Sigma^{4n-5}Q_2,Sp(n)]}&0,\\}; \path[->,font=\scriptsize]
  (m-1-2) edge node[above] {$(\alpha_k)_*$}(m-1-3)
  (m-1-2) edge node[left] {$\beta_k$}(m-2-2)
      (m-2-1) edge node[above] {$\vartheta$} (m-2-2)
    (m-1-1) edge (m-1-2)
    (m-1-3) edge (m-1-4)
    (m-2-2) edge (m-2-3)
    (m-2-3) edge (m-2-4);
    \draw[double distance =2pt,font=\scriptsize]
    (m-1-3) -- (m-2-3);
 \end{tikzpicture}
\end{equation*}
where $[\Sigma^{4n-4(m+1)}Q_2,Sp(n)]\cong\widetilde{KSp}(\Sigma^2A)$. We have $Im(\alpha_k)_*=Im\beta_k/Im\vartheta$. Note that in the Proposition \ref{c3} we calculated the $Im\vartheta$, thus we need to calculate $Im\beta_k$.\par
Let $a: \Sigma Q_2\rightarrow BSp(\infty)$ be
the adjoint of the composition of the inclusions $Q_2 \rightarrow Sp(2) \rightarrow Sp(\infty)$ and also $b: \Sigma Q_2\rightarrow BSp(\infty)$ be the pinch map of the bottom cell $q: \Sigma Q_2 \rightarrow S^8$
followed by a generator of $\pi_8(BSp(\infty))\cong \mathbb{Z}$. Note that $\widetilde{KSp} (\Sigma Q_2)$ is a free abelian group with a basis $a , b$. Then we have
$$ch(c'(a))=\Sigma y_3-\frac{1}{6} \Sigma y_7, \qquad ch(c'(b))=-2\Sigma y_7.$$
Let $\theta_1= \textbf{q}(\zeta_{2(n-m+1)-4}c'(a))\in \widetilde{KSp}(\Sigma^{4(n-m+1)-7}Q_2)$, where $\textbf{q}:K \rightarrow KSp$ is the quaternionization. Also, let $\theta_2:\Sigma^{4(n-m+1)-7}Q_2 \rightarrow BSp(\infty)$ be the composite of the pinch map to the top cell $\Sigma^{4(n-m+1)-7}Q_2 \rightarrow S^{4(n-m+1)}$ and a generator of $\pi_{4(n-m+1)}(BSp(\infty))\cong \mathbb{Z}$. Then $\widetilde {KSp} (\Sigma^2A)$ is a free abelian group with a basis $\theta_1, \theta_2$. We have
\begin{align*}
ch(c'(\theta_1))= \left\{ \begin{array}{ll}
\Sigma^{4(n-m+1)-7} y_3-\frac{1}{6} \Sigma^{4(n-m+1)-7} y_7  & \textrm{if $n-m+1$ is even},\\\\
2\Sigma^{4(n-m+1)-7} y_3+\frac{1}{3} \Sigma^{4(n-m+1)-7} y_7& \textrm{if $n-m+1$ is odd},\\
\end{array} \right.
\end{align*}
and
\begin{align*}
 ch(c'(\theta_2))= \left\{ \begin{array}{ll}
-2\Sigma^{4(n-m+1)-7} y_7 & \textrm{if $n-m+1$ is even},\\\\
\Sigma^{4(n-m+1)-7} y_7 & \textrm{if $n-m+1$ is odd}.\\
\end{array} \right.
\end{align*}
\begin{lem}\label{d4}
$Im\beta_k$ is isomorphic to
\begin{displaymath}
 \left\{ \begin{array}{ll}
\mathbb{Z}\{\frac{1}{6}(2(n-m+1)-1)! k\sigma^{4n-5}\otimes  y_7\} & \textrm{if $n-m+1$ is even},\\\\
\mathbb{Z}\{\frac{1}{12}(2(n-m+1)-1)! k\sigma^{4n-5}\otimes  y_7\} & \textrm{if $n-m+1$ is odd }.\\
\end{array} \right.
\end{displaymath}
\end{lem}
\begin{proof}
For $i=1,2$, by Lemma \ref{d1}, $(\beta_k)_*(\theta_i)$ has a lift ${\tilde{\theta}}_{i,k}\colon S^{4m}\wedge A\rightarrow \Omega X_n$ such that
$${{\tilde{\theta}}_{i,k}}^*(a_{4n+2})=k\sigma^{4m}\otimes \Sigma ^{-1} {\theta_i}^* (y_{4(n-m+1)-1}).$$
Since $y_{4i-1}=\sigma (q_i)$ and $q_i={c'}^*(c_{2i})$, so we get
$${{\tilde{\theta}}_{i,k}}^*(a_{4n+2})=k\sigma^{4m}\otimes \Sigma ^{-2} q_{(n-m+1)}(\theta_i)=k\sigma^{4m}\otimes \Sigma ^{-2} c_{2(n-m+1)}(c'(\theta_i)). $$
On the other hand we have
\begin{align*}
c_{2(n-m+1)}(c'(\theta_1))= \left\{ \begin{array}{ll}
\frac{1}{6}(2(n-m+1)-1)! \Sigma^{4(n-m+1)-7} y_7  & \textrm{if $n-m+1$ is even},\\\\
\frac{1}{3}(2(n-m+1)-1)! \Sigma^{4(n-m+1)-7} y_7& \textrm{if $n-m+1$ is odd},\\
\end{array} \right.
\end{align*}
\begin{align*}
c_{2(n-m+1)}(c'(\theta_2))= \left\{ \begin{array}{ll}
2(2(n-m+1)-1)!\Sigma^{4(n-m+1)-7} y_7 & \textrm{if $n-m+1$ is even},\\\\
(2(n-m+1)-1)!\Sigma^{4(n-m+1)-7} y_7 & \textrm{if $n-m+1$ is odd}.\\
\end{array} \right.
\end{align*}
Therefore we get
\begin{align*}
{{\tilde{\theta}}_{1,k}}^*(a_{4n+2})= \left\{ \begin{array}{ll}
k\sigma^{4m}\otimes \frac{1}{6}(2(n-m+1)-1)! \Sigma^{4(n-m+1)-9} y_7  & \textrm{if $n-m+1$ is even},\\\\
k\sigma^{4m}\otimes\frac{1}{3}(2(n-m+1)-1)! \Sigma^{4(n-m+1)-9} y_7& \textrm{if $n-m+1$ is odd},\\
\end{array} \right.
\end{align*}
and
\begin{align*}
{{\tilde{\theta}}_{2,k}}^*(a_{4n+2})= \left\{ \begin{array}{ll}
k\sigma^{4m}\otimes 2(2(n-m+1)-1)! \Sigma^{4(n-m+1)-9} y_7  & \textrm{if $n-m+1$ is even},\\\\
k\sigma^{4m}\otimes(2(n-m+1)-1)! \Sigma^{4(n-m+1)-9} y_7& \textrm{if $n-m+1$ is odd}.\\
\end{array} \right.
\end{align*}
If $n-m+1$ is even, then $Im\beta_k \cong \mathbb{Z}\{\alpha_1, \beta_1\}$, where
\begin{align*}
&\alpha_1=\frac{1}{6}(2(n-m+1)-1)! k\sigma^{4n-5}\otimes y_7, \\
& \beta_1 = 2(2(n-m+1)-1)! k\sigma^{4n-5}\otimes y_7.
\end{align*}
Also, $Im\beta_k$ is generated by $2\alpha_1 - \frac{1}{12}\beta_1=\frac{1}{6}(2(n-m+1)-1)! k\sigma^{4n-5}\otimes y_7$. If $n-m+1$ is odd, then $Im\beta_k \cong \mathbb{Z}\{\alpha_2, \beta_2\}$, where
\begin{align*}
&\alpha_2=\frac{1}{3}(2(n-m+1)-1)! k\sigma^{4n-5}\otimes  y_7, \\
& \beta_2 = (2(n-m+1)-1)! k\sigma^{4n-5}\otimes y_7.
\end{align*}
Also, $Im\beta_k$ is generated by $\alpha_2 - \frac{1}{4}\beta_2=\frac{1}{12}(2(n-m+1)-1)! k\sigma^{4n-5}\otimes y_7$. Therefore we can conclude that
\begin{displaymath}
Im\beta_k \cong \left\{ \begin{array}{ll}
\mathbb{Z}\{\frac{1}{6}(2(n-m+1)-1)! k\sigma^{4n-5}\otimes y_7\} & \textrm{if $n-m+1$ is even},\\\\
\mathbb{Z}\{\frac{1}{12}(2(n-m+1)-1)! k\sigma^{4n-5}\otimes  y_7\} & \textrm{if $n-m+1$ is odd }.\\
\end{array} \right.
\end{displaymath}
\end{proof}
Therefore by Proposition \ref{c3} and Lemma \ref{d4}, we obtain the following proposition.
\begin{prop}\label{d5}
The following hold:\\
\textbf {Case one:} if $m$ is even integer then
\begin{displaymath}
Im (\alpha_k)_*\cong \left\{ \begin{array}{ll}
\mathbb{Z}/((2n+1)!/(3(k,4t))) & \textrm{if $n$ is even },\\\\
\mathbb{Z}/((2n+1)!/(6(k,t))) & \textrm{if $n$ is odd }.\\
\end{array} \right.
\end{displaymath}
\textbf {Case two:} if $m$ is odd integer then
\begin{displaymath}
Im (\alpha_k)_*\cong \left\{ \begin{array}{ll}
\mathbb{Z}/((2n+1)!/(3(k,2t))) & \textrm{if $n$ is even },\\\\
\mathbb{Z}/((2n+1)!/(6(k,2t))) & \textrm{if $n$ is odd }. \quad \Box \\
\end{array} \right.
\end{displaymath}
\end{prop}
Therefore by Theorem \ref{c4}, Lemma \ref{d3} and Proposition \ref{d5}, we get the following theorem.
\begin{thm}\label{d6}
There is an isomorphism\\
\textbf {Case one:} if $m$ is even integer then
\begin{displaymath}
[\Sigma^{4n-4(m+1)}Q_2, B\mathcal{G}_{k,m}(Sp(n))]\cong \left\{ \begin{array}{ll}
\mathbb{Z}_{(k,4t)} & \textrm{if $n$ is even },\\\\
\mathbb{Z}_{(k,t)} & \textrm{if $n$ is odd },
\end{array} \right.
\end{displaymath}
\textbf {Case two:} if $m$ is odd integer then
$$[\Sigma^{4n-4(m+1)}Q_2, B\mathcal{G}_{k,m}(Sp(n))]\cong \mathbb{Z}_{(k,2t)}. \quad \Box$$
\end{thm}
Here we prove Theorem \ref{a2}.\\
$\textbf{Proof of Theorem \ref{a2}}$\\
We write $|H|$ for the order of a group $H$. Consider the exact sequence $(**)$. Suppose that $\mathcal{G}_{k,m}(Sp(n))\simeq \mathcal{G}_{k',m}(Sp(n))$, then there is an isomorphism of groups
$$[\Sigma^{4n-4(m+1)}Q_2, B\mathcal{G}_{k,m}(Sp(n))]\cong [\Sigma^{4n-4(m+1)}Q_2, B\mathcal{G}_{k',m}(Sp(n))].$$
Thus we have
$$|[\Sigma^{4n-4(m+1)}Q_2, B\mathcal{G}_{k,m}(Sp(n))]|=|[\Sigma^{4n-4(m+1)}Q_2, B\mathcal{G}_{k',m}(Sp(n))]|.$$
Therefore by Theorem \ref{d6}, we can conclude Theorem \ref{a2}.$\quad \Box$
\section{Proof of Theorem \ref{a3}}\label{e0}
In this section, we will prove Theorem \ref{a3}. First, we need to the following lemma that is very important in determining the upper bound of the number of homotopy types of gauge groups. Let $Y$ be an $H$-space with a homotopy inverse, and let $k:Y \rightarrow Y$ be the $k^{th}$-power map. By \cite{[T_2]} we have the following lemma.
\begin{lem}\label{e1}
Let $X$ be a space and $Y$ be an $H$-space with a homotopy inverse. Suppose there is a map $X \overset{f}\rightarrow Y$ of order $m$, where $m$ is finite. Let $F_k$ be the homotopy fiber of map $k\circ f$. If $(m,k) = (m, k')$ then $F_k$ and $F_{k'}$ are homotopy equivalent when localized rationally or at any prime. $\quad$ $\Box$
\end{lem}
Consider the cofibration sequence
$$S^{4(n+m)-3} \overset{\theta}\longrightarrow \Sigma^{4m-1} Q_{n-1} \rightarrow \Sigma^{4m-1} Q_n.$$
Since $\Sigma^{4m-1} Q_{n-1}$ is $(4m+1)$-connected, by \cite{[T]} the suspension map
$$\Sigma^{\infty}: [S^{4(n+m)-3}, \Sigma^{4m-1} Q_{n-1} ] \rightarrow \{S^{4(n+m)-3} , \Sigma^{4m-1} Q_{n-1} \}$$
is isomorphic for $n\leq m+1.$
On the other hand, by \cite{[K]} we have $ \Sigma^{\infty} (\theta) =0$, so $\theta =0$. Therefore we get the homotopy equivalent
$$\Sigma^{4m-1} Q_n \simeq  \Sigma^{4m-1} Q_{n-1}\vee  S^{4(n+m)-2}. \quad (\star)$$
Here, localized at odd primes, we will study the order of Samelson products $S^{4m-1}\wedge Sp(n)\rightarrow  Sp(n)$ for cases $m=2, n=3$ and $m=3, n=4$, respectively. Then we will obtain an upper bound to the number of homotopy types of $Sp(3)$-gauge group over $S^{8}$ and $Sp(4)$-gauge group over $S^{12}$, respectively. \\
$\textbf{case m=2, n=3}$\\
By relation $(\star)$, we have the isomorphism
\begin{align*}
[\Sigma^7 Q_3 , Sp(3)]&\cong [\Sigma^7 Q_2\vee S^{18} , Sp(3)]\\&\cong [\Sigma^7 Q_2 , Sp(3)] \oplus \pi_{18}(Sp(3)).
\end{align*}
By \cite{[M1]}, we have $[\Sigma^7 Q_2 , Sp(3)] \cong \mathbb{Z}_{3\cdot 5\cdot 7}$ and also by \cite{[MT]}, we have  $\pi_{18}(Sp(3))\cong \mathbb{Z}_{3^3\cdot5\cdot 7}$.
Thus the order of Samelson product $S^7\wedge Q_3 \rightarrow Sp(3)$ is at most $3^3\cdot 5\cdot 7=945$. Therefore localized at odd primes, we can conclude that the order of Samelson product $S^7\wedge Sp(3)\rightarrow  Sp(3)$ is at most $3^3\cdot 5\cdot 7=945$. Therefore by Lemma \ref{e1}, we get the following theorem.
\begin{thm}\label{e2}
Localized at odd primes, if $(945, k)=(945,k')$ then $\mathcal{G}_{k,2}(Sp(3))$ is homotopy equivalent to $\mathcal{G}_{k',2}(Sp(3))$.  $\quad \Box$
\end{thm}
$\textbf{case m=3, n=4}$\\
By relation $(\star)$, we have the isomorphism
\begin{align*}
[\Sigma^{11} Q_4 , Sp(4)]&\cong [\Sigma^{11} Q_3\vee S^{26} , Sp(4)]\\&\cong [\Sigma^{11} Q_3 , Sp(4)] \oplus \pi_{26}(Sp(4)),
\end{align*}
where by \cite{[I]}, we have $\pi_{26}(Sp(4))\cong \mathbb{Z}_{3^4\cdot 5\cdot 7\cdot 11}$. On the other hand, similarly, we have
\begin{align*}
[\Sigma^{11} Q_3 , Sp(4)]&\cong [\Sigma^{11} Q_2\vee S^{22} , Sp(4)]\\&\cong [\Sigma^{11} Q_2 , Sp(4)] \oplus \pi_{22}(Sp(4)).
\end{align*}
Now, by \cite{[M1]}, we have $[\Sigma^{11} Q_2 , Sp(4)] \cong \mathbb{Z}_{3^3\cdot 5\cdot 7}$ and also by \cite{[MT]}, we know that  $\pi_{22}(Sp(4))\cong \mathbb{Z}_{3^4\cdot 5^2\cdot 7\cdot 11}$. Thus the order of Samelson product $S^{11}\wedge Q_4 \rightarrow Sp(4)$ is at most $3^4\cdot 5^2\cdot 7\cdot 11$. Therefore localized at odd primes, we can conclude that the order of Samelson product $S^{11}\wedge Sp(4)\rightarrow  Sp(4)$ is at most $3^4\cdot 5^2\cdot 7\cdot 11$. Therefore by Lemma \ref{e1}, we get the following theorem.
\begin{thm}\label{e3}
Localized at odd primes, if $(3^4\cdot 5^2\cdot 7\cdot 11, k)=(3^4\cdot 5^2\cdot 7\cdot 11,k')$ then $\mathcal{G}_{k,3}(Sp(4))$ is homotopy equivalent to $\mathcal{G}_{k',3}(Sp(4))$.  $\quad \Box$
\end{thm}

Competing interests: The author declares that he has no competing interests.\\


Department of Mathematics, Faculty of Sciences,  Urmia University, P.O. Box $5756151818$, Urmia, Iran \\E-mail: sj.mohammadi@urmia.ac.ir\\
\end{document}